\documentclass[12pt]{amsart}

\usepackage{pgf}
\usepackage{graphicx}
\usepackage{amsmath, amsthm, amssymb, amscd}

\newcommand{\eqe}{\approx_{\epsilon}}

\newcommand\Z{{\mathbb Z}}
\newcommand\N{{\mathbb N}}

\newcommand\Q{{\mathbb Q}}

\newcommand\bp{\begin{proof}}
\newcommand\ep{\end{proof}}

\newcommand\bprop{\begin{prop}}
\newcommand\eprop{\end{prop}}

\newcommand\bt{\begin{thm}}
\newcommand\et{\end{thm}}

\newcommand\bc{\begin{cor}}
\newcommand\ec{\end{cor}}

\newcommand\ba{\begin{aligned}}
\newcommand\ea{\end{aligned}}

\newcommand\bl{\begin{lem}}
\newcommand\el{\end{lem}}

\newcommand\bi{\begin{itemize}}
\newcommand\ei{\end{itemize}}

\newcommand\br{\begin{rem}}
\newcommand\er{\end{rem}}

\newcommand\bd{\begin{defn}}
\newcommand\ed{\end{defn}}

\newcommand\Qq{{\mathcal Q}}
\newcommand\Zz{{\mathcal Z}}
\newcommand\Cc{{\mathcal C}}

\newcommand\qag{\Qq\rtimes_{\alpha}G}

\DeclareMathOperator{\Aut}{Aut}

\DeclareMathOperator{\Ad}{Ad}

\DeclareMathOperator{\id}{id}

\newtheorem{thm}{Theorem}[section]

\newtheorem*{thm*}{Theorem}

\newtheorem{prop}[thm]{Proposition}
\newtheorem{lem}[thm]{Lemma}
\newtheorem{cor}[thm]{Corollary}

\newtheorem*{q*}{Question}

\theoremstyle{remark}
\newtheorem{rem}[thm]{Remark}
\newtheorem*{rem*}{Remark}

\theoremstyle{definition}
\newtheorem{defn}[thm]{Definition}

\newtheorem{eg}[thm]{Example}
\address{ Westf\"{a}lische Wilhelms-Universit\"{a}t M\"{u}nster}
\email{sunm@uni-muenster.de}
\author{Michael Sun}
\title{Model actions for almost reduced groups on UHF algebras}
\begin{document}
\begin{abstract}
For any countable discrete group $G$ with a reduced abelian subgroup of finite index, we construct an action $\alpha$ of $G$ on the universal UHF algebra $\Qq$ using an infinite tensor product of permutation representations of $G$ and show that these actions possess some sort of Rokhlin property. The crossed product $\qag$ is then deduced to be tracially AF with a unique tracial state. We also compute the Elliott invariants in the case that $G$ is abelian.

\end{abstract}
\maketitle
\section*{Introduction}
One of the first examples of the Rokhlin property that one might encounter is the following action of $\Z/2\Z$ on the UHF algebra $M_{2^{\infty}}$: First write
$$M_{2^{\infty}}=M_2\otimes M_2\otimes M_2\otimes\dots$$
and consider the order $2$ automorphism determined by
$$\alpha=\Ad\begin{pmatrix}0&1\\1&0\end{pmatrix}\otimes\Ad\begin{pmatrix}0&1\\1&0\end{pmatrix}\otimes\Ad\begin{pmatrix}0&1\\1&0\end{pmatrix}\otimes\dots$$
Then the $\Z/2\Z$-action determined by $\alpha$ has the \emph{Rokhlin property}. We might also say that $\alpha$ has the \emph{order 2} Rokhlin property as an automorphism. This action is particularly nice because it preserves a tensor product decomposition of $M_{2^{\infty}}$ and is inner on each factor. We use this example to motivate an investigation into the following class of actions. 
Let $\Qq$ be the universal UHF algebra with some tensor product decomposition
$$\Qq=M_{n_1}\otimes M_{n_2}\otimes M_{n_3}\otimes\dots.$$
Then there is an action of the product of symmetric groups
$$\prod_{l=1}^\infty S_{n_l}=S_{n_1}\times S_{n_2}\times S_{n_3}\times\dots$$
where each $S_{n_l}$ acts by conjugation by unitiaries from its natural permutation representation. By restricting this action we can define an action of any countable \emph{residually finite group}. Even simpler still is the action of the product of finite cyclic groups 
$$\prod_{l=1}^{\infty}\Z/n_l\Z =\Z/n_1\Z\times\Z/n_2\Z\times\Z/n_3\Z\times\dots$$ obtained from the previous action via a restriction of the previous representations of $S_{n_l}$ to the cyclic subgroup generated by $n_l$-cycle  $(123\dots n_l)$. Restricting to countable subgroups gives us actions for abelian groups with only trivial divisible subgroups. Such groups are often called \emph{reduced} groups. Much can be said about these actions of reduced abelian groups and their crossed products:

\begin{thm*} Suppose $G$ is a countable discrete abelian group of rank $r$ acting on $\Qq$ by an action $\alpha$ implemented by
$$G\subset \prod_{l=1}^\infty \Z/n_l\Z\to \Aut\Qq.$$
Then we have
\bi
\item $\alpha$ has the ``Rokhlin property'' if and only if $G\cap \bigoplus_{l=1}^\infty \Z/n_l\Z=0$,\\
\ei
For such $\alpha$ satisfying the ``Rokhlin property'' we have
\bi
\item[1.] $\Qq\rtimes_{\alpha}G$ is a simple $A\mathbb{T}$ algebra with a unique tracial state,\\
\item[2.] $K_i(\qag)=\Q^{2^{r-1}}$ for $i\in\{0,1\}$ if $r<\infty$.\\
\item[2.] $K_i(\qag)=\Q^{\oplus\N}$ for $i\in\{0,1\}$ if $r=\infty$.\\
\item[3.] The order on $K_0$ is determined uniquely up to isomorphism.\\
\item[4.] Every reduced abelian group has an action of this form.\\
\ei
\end{thm*}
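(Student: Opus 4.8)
The plan is to treat the four clauses in turn, leaning on the Rokhlin‑type machinery set up earlier and on the classification of tracially AF algebras.

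\textbf{The Rokhlin dichotomy.} Let $q_L\colon G\to G_L$ be the map induced by the coordinate projection $\prod_l\Z/n_l\Z\to\prod_{l\le L}\Z/n_l\Z$; then $G_L$ acts on $B_L:=\bigotimes_{l\le L}M_{n_l}$ by the restriction to $G_L\le\prod_{l\le L}\Z/n_l\Z$ of the translation action on the canonical basis indexed by $\prod_{l\le L}\Z/n_l\Z$, which is \emph{free}, so the rank‑one projections onto permuted basis vectors form a genuine Rokhlin system for $G_L$ acting on $B_L\subseteq\Qq$. If $G\cap\bigoplus_l\Z/n_l\Z=0$, then every nonzero element of $G$ has infinitely many nonzero coordinates, so given a finite $F\subseteq G$ one picks $L$ with $q_L|_F$ injective; pulling the Rokhlin system for $G_L$ back along $q_L$ — and, by enlarging $L$, making it approximately commute with any prescribed finite subset of $\Qq$, which it does because $B_L$ is a tensor factor — yields an approximate Rokhlin system for $\alpha$ relative to $(F,\epsilon)$. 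Conversely, if $0\ne g\in G\cap\bigoplus_l\Z/n_l\Z$, say with support in $\{1,\dots,L_0\}$, then $\alpha_g=\Ad(w)$ for a nontrivial permutation unitary $w\in B_{L_0}\subseteq\Qq$, i.e.\ $\alpha_g$ is inner; but the Rokhlin property forces each $\alpha_g$ with $g\ne0$ to be non‑approximately‑inner (approximately central Rokhlin projections get moved to orthogonal ones, whereas $\Ad(w)$ fixes them up to $\epsilon$). Hence $\alpha$ has the Rokhlin property precisely when $G\cap\bigoplus_l\Z/n_l\Z=0$.

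\textbf{Structure of the crossed product.} Assume the Rokhlin property. Feeding the Rokhlin systems into the standard argument that the Rokhlin property promotes the (trivially AF) local structure of $\Qq$ to a tracially AF structure on the crossed product, one obtains, for every finite $F\subseteq\qag$ and $\epsilon>0$, a finite‑dimensional subalgebra $D\subseteq p\qag p$ with unit $p$ satisfying $\|[p,a]\|<\epsilon$, $\mathrm{dist}(pap,D)<\epsilon$ for $a\in F$, and $\tau(1-p)<\epsilon$; that is, $\qag$ is tracially AF. Moreover $\qag$ is unital, separable, nuclear and satisfies the UCT (a crossed product of $\Qq$ by $\Z$'s and finite groups), it is simple because the Rokhlin property renders each $\alpha_g$ ($g\ne0$) properly outer so that $\Qq\subseteq\qag$ meets every ideal, and it has a unique trace because the unique $\alpha$‑invariant trace of $\Qq$ extends and every trace of $\qag$ restricts to it and is determined by that restriction. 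By the classification of simple unital tracially AF algebras, $\qag$ is a simple unital $A\T$ algebra — clause (1) — and, having strict comparison and a unique trace, its $K_0$‑order is exactly $\{x:\tau_*(x)>0\}\cup\{0\}$, which reduces clause (3) to the group‑level computation below.

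\textbf{K‑theory (clauses (2), (2$'$), (3)).} For $G=\Z^r$ write $\qag=A_r$ with $A_0=\Qq$ and $A_k=A_{k-1}\rtimes_{\alpha_k}\Z$, where $\alpha_k$ is the $k$‑th coordinate automorphism extended to $A_{k-1}$ by fixing the implementing unitaries $u_1,\dots,u_{k-1}$; this is a well‑defined automorphism precisely because the coordinate copies of $\Z$ commute. Then $\alpha_{k*}=\id$ on $K_*(A_{k-1})$: it fixes $[1]$, forcing triviality on $K_0$ by unique divisibility of $K_0(\Qq)=\Q$, and it fixes the $[u_i]$, which generate $K_*(A_{k-1})$ as a ring over $\Q$. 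Hence every Pimsner--Voiculescu connecting map $\id-\alpha_{k*}$ vanishes, the six‑term sequences split into short exact sequences of $\Q$‑vector spaces, and $K_0(A_k)\cong K_1(A_k)\cong K_0(A_{k-1})\oplus K_1(A_{k-1})$; starting from $(K_0,K_1)(\Qq)=(\Q,0)$ this gives $K_i(\Qq\rtimes_\alpha\Z^r)\cong\Q^{2^{r-1}}$ for $r\ge1$. A Rokhlin action of a finite abelian group $F$ on $\Qq$ has crossed product again $\Qq$ (its $K_0$ is $\Q$ localised at $|F|$, hence $\Q$, with $K_1=0$ and unique trace), so inserting finite cyclic factors changes nothing; combined with the structure theorem for finitely generated abelian groups, $K_i(\Qq\rtimes_\alpha G)\cong\Q^{2^{r-1}}$ for every finitely generated $G$ of rank $r\ge1$. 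For general countable abelian $G$ of rank $r$, write $G=\varinjlim G_m$ with $G_m$ finitely generated — the restricted actions are still Rokhlin since $G_m\cap\bigoplus_l\Z/n_l\Z=0$ — whence $\qag=\varinjlim\Qq\rtimes_\alpha G_m$ and $K_i(\qag)=\varinjlim\Q^{2^{r_m-1}}$, which is $\Q^{2^{r-1}}$ when $r<\infty$ (choosing the $G_m$ to contain a fixed rank‑$r$ free subgroup makes the system stationary of the correct finite dimension) and $\Q^{\oplus\N}$ when $r=\infty$. Finally, in every case $\tau$ pairs with $K_0(\qag)$ with image $\Q$ (the generators beyond $[1]$ arise as trace‑zero Bott elements), so $(K_0(\qag),K_0(\qag)^+,[1])$ is, for all choices of $(n_l)$ and all $G$ of rank $r$, isomorphic to $\Q^{2^{r-1}}$ ordered by the coordinate state — two such being isomorphic by matching splittings off the kernel of the state — which is clause (3).

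\textbf{Realisation (clause (4)) and the main obstacle.} By the structural description of reduced abelian groups recalled earlier, a countable reduced abelian group is residually finite and so embeds into some $\prod_l\Z/n_l\Z$ (treat the torsion and torsion‑free parts separately); replacing the family of finite quotients by one in which, for each nonzero $g$, a quotient detecting $g$ recurs infinitely often — a diagonal enumeration — forces $G\cap\bigoplus_l\Z/n_l\Z=0$, and interleaving extra matrix factors carrying the trivial $G$‑action makes $\bigotimes_lM_{n_l}=\Qq$ without disturbing this; by the dichotomy the resulting action has the Rokhlin property, so every reduced abelian group appears. The main technical obstacle is the ``Rokhlin implies tracially AF'' step: one must convert the purely combinatorial Rokhlin towers over the finite quotients $G_L$ into genuine finite‑dimensional corners of $\qag$ whose complements vanish in trace, uniformly over finite subsets — after which simplicity, uniqueness of trace, the $A\T$ conclusion and the order on $K_0$ all follow from classification once the K‑groups above are in hand, the only further delicate point being the direct‑limit bookkeeping in the infinite‑rank case.
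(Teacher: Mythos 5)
Your overall architecture tracks the paper's on the dichotomy, the tracially AF/classification step, and the realisation clause, but your K-theory computation is a genuinely different route: the paper never iterates Pimsner--Voiculescu. For finitely generated $G=\Z^r\oplus H$ it writes $\qag$ as a direct limit of blocks $M_{(n_l)_{l\le m}}\otimes C^*(\Z^r)\otimes C^*(H)$ and homotopes the connecting maps --- using that all implementing unitaries inside a fixed factor $M_{n_{m+1}}$ are simultaneously diagonalizable powers of one cyclic shift --- to the untwisted maps, so the K-theory is that of $C^*(\Z^r)\otimes(\Qq\rtimes_{\alpha|_H}H)$, with the finite part absorbed by the analogue of Proposition \ref{finiteabelian}; the countable case is then a direct limit in which injectivity of the connecting maps on K-theory is proved via an explicit commutative diagram (multiplication by $k$ into the free part). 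Your iterated crossed-product scheme can be made to work and would buy a more self-contained computation, but as written it has gaps.

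Two concrete problems. First, in the dichotomy you pull the Rokhlin system back from $B_L$, the very tensor block in which the finite set of algebra elements is (approximately) supported, so those projections will not approximately commute with that set; you must place the tower in coordinates strictly beyond the block carrying the approximants, and then ``$q_L|_F$ injective'' is not the right condition --- for $g$ of finite order $k$ you need its image in those later coordinates to retain order exactly $k$, which is what the paper secures by showing that for each prime power $P_i^{r_i}$ dividing $k$ there are infinitely many $l$ with $P_i^{r_i}$ dividing the order of $i(g)_{n_l}$ (because $g^{k/P_i}\notin\bigoplus_l\Z/n_l\Z$). Second, your key step $\alpha_{k*}=\id$ on $K_*(A_{k-1})$ is justified by saying $[1]$ and the $[u_i]$ ``generate $K_*(A_{k-1})$ as a ring over $\Q$'': $K_*$ of a C*-algebra carries no canonical ring structure, and for $k-1\ge 2$ the Bott-type classes in $K_0(A_{k-1})$ do not lie in the subgroup generated by $[1]$, so fixing $[1]$ and the $[u_i]$ does not by itself force triviality on $K_*$. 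The fix available in this model is the same commutation fact the paper uses: the partial-product unitaries $w^{(m)}$ implementing $\alpha_k$ are fixed by the automorphisms of the other coordinates, so $\Ad(w^{(m)})$ fixes each $u_i$ and converges pointwise to the extension of $\alpha_k$ to $A_{k-1}$; hence that extension is approximately inner and $\alpha_{k*}=\id$. Relatedly, the injectivity of $K_*(\Qq\rtimes G_m)\to K_*(\Qq\rtimes G_{m+1})$ that underlies your ``stationary system'' claim in the finite-rank limit is exactly what the paper's diagram argument establishes; you assert it without proof.
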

The proof appears in Sections 2, 3 and 5, while the next result is given in Sections 4 and 5.
We say a group is \emph{almost reduced} if it has a reduced abelian subgroup of finite index. 
\begin{thm*} Suppose $G$ is a countable discrete almost reduced group. Then there exists an action $\alpha$ implemented by
$$G\subset \prod_{l=1}^\infty S_{n_l}\to \Aut\Qq$$
such that
\bi
\item $\alpha$ has the ``Rokhlin property'' and\\
\item $\qag$ is a simple nuclear TAF algebra.
\ei
\end{thm*}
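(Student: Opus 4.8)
The plan is to reduce the statement to the abelian case of the preceding theorem by inducing permutation representations from a finite‑index normal subgroup. Let $H\leq G$ be a reduced abelian subgroup of finite index and let $N:=\bigcap_{g\in G}gHg^{-1}$ be its normal core, so that $N\trianglelefteq G$, $\Gamma:=G/N$ is finite, and $N$ is again reduced (any divisible subgroup of $N$ is a divisible subgroup of $H$). Using the construction behind the last item of the abelian theorem (every reduced abelian group admits such an action), fix an embedding $\iota\colon N\hookrightarrow\prod_l\Z/n_l\Z$ with $N\cap\bigoplus_l\Z/n_l\Z=0$; it implements a Rokhlin action $\beta$ of $N$ on $\Qq=\bigotimes_l M_{n_l}$, where in the $l$‑th factor $N$ permutes the standard basis $X_l$ of $\C^{n_l}$ by cyclic translation through $\iota(\,\cdot\,)_l$. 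For each $l$ I would induce the $N$‑set $X_l$ up to the $G$‑set $Y_l:=G\times_N X_l$, of cardinality $N_l:=|\Gamma|\,n_l$; the resulting permutation representations $G\to\Sym(Y_l)\cong S_{N_l}$ assemble into a homomorphism $G\to\prod_l S_{N_l}$ which is faithful (its kernel lies in $N$, where it is contained in $\ker\iota=0$), hence into an action $\alpha$ of $G$ on $\bigotimes_l M_{N_l}$. Since $\prod_l N_l$ is again the universal supernatural number, $\bigotimes_l M_{N_l}\cong\Qq$ and $\alpha$ has the required form $G\subset\prod_l S_{N_l}\to\Aut\Qq$.

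The substantive step is to verify that $\alpha$ has the (weak) ``Rokhlin property''. I would first reduce this to a statement about $\alpha|_N$: fixing coset representatives $g_1=e,\dots,g_{|\Gamma|}$ and using normality of $N$ one gets an $N$‑equivariant identification $Y_l\cong\bigsqcup_{i=1}^{|\Gamma|}X_l$ under which $n\in N$ acts on the $i$‑th block by cyclic translation through $\iota(g_i^{-1}ng_i)_l$; since the $g_1=e$ block reproduces $\beta$, the freeness/tiling behaviour of $\beta$ on the $X_l$'s — which is exactly what $N\cap\bigoplus=0$ encodes and which underlies the Rokhlin property in the abelian theorem — is inherited, with room to spare, by $\alpha|_N$ on the $Y_l$'s, and $N\cap\bigoplus=0$ persists for the enlarged embedding $n\mapsto(\iota(g_i^{-1}ng_i)_l)_{l,i}$. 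One then upgrades to the full group: the only content of $\alpha$ beyond $\alpha|_N$ is the permutation by $\Gamma$ of the $|\Gamma|$ coset blocks, and the weak ``Rokhlin property'' for $G$ is designed precisely so that this finite, controlled motion is harmless — concretely, approximate Rokhlin data for $G$ is produced by symmetrising the data for $\alpha|_N$ over $\Gamma$, or by passing to a tail tensor factor $\bigotimes_{l\geq L}M_{N_l}$ on which $\alpha$ restricts to a genuine model action.

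With the ``Rokhlin property'' established, the conclusion follows from the general machinery collected in Section~5: the ``Rokhlin property'' together with amenability of $G$ (which is virtually abelian) forces $\qag$ to be simple, nuclear, and TAF with a unique tracial state. For a more hands‑on route one may instead write $\qag\cong B\rtimes_{\bar\alpha}\Gamma$ with $B:=\Qq\rtimes_{\alpha|_N}N$; the argument proving the first numbered item of the abelian theorem (which uses only the Rokhlin property of $\alpha|_N$) shows $B$ is a simple unital $A\mathbb{T}$ algebra with a unique trace, hence simple TAF, nuclearity is automatic, and the Rokhlin property of $\alpha$ yields the tracial Rokhlin property for the finite‑group action $\bar\alpha$, whence $B\rtimes_{\bar\alpha}\Gamma$ is TAF (Osaka--Phillips) and simple (each $\bar\alpha_g$, $g\neq e$, is outer on the simple algebra $B$).

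The hard part will be the middle step. One must (a) choose $\iota$ so that the condition $N\cap\bigoplus=0$ is genuinely inherited after induction — sketched above — and, more delicately, (b) pin down the weak ``Rokhlin property'' for the infinite group $G$ with exactly the right strength so that the $\Gamma$‑permutation of coset blocks is absorbed, and so that, along the hands‑on route, the resulting tracial Rokhlin projections for $\bar\alpha$ actually live in $B$ (not merely in $\Qq$) and carry the correct trace. Once that definition and its behaviour under induction are nailed down, everything downstream is quoted either from the abelian theorem or from standard crossed‑product theory.
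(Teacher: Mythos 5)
Your overall strategy is the paper's: induce finite permutation representations from the finite-index reduced abelian subgroup via the fibered product $G\times_H X_l$ and then quote the Section 5 machinery for simplicity, nuclearity and TAF. But there is a genuine gap exactly at the step you yourself flag as "to be pinned down", and the paper resolves it concretely. The ``Rokhlin property'' used in this theorem is the vanishing-trace condition: for every $g\neq 1$, the permutation of the $l$-th induced set has no fixed points (equivalently $\tau(\rho_l(g))=0$) for infinitely many $l$. Your verification only shows that the restricted abelian action $\alpha|_N$ satisfies the intersection condition $N\cap\bigoplus=0$ for the enlarged embedding $n\mapsto(\iota(g_i^{-1}ng_i)_l)_{l,i}$ -- that is true (look at the $i=1$ block) but it is not the statement needed. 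For $g\in N\setminus\{1\}$, freeness of $g$ on $Y_l=G\times_N X_l$ requires \emph{all} of the finitely many conjugates $g_i^{-1}gg_i$ to act freely on the \emph{same} factor $X_l$; the hypothesis on $\iota$ only gives, for each single conjugate separately, infinitely many good $l$, and these sets of indices can be disjoint (e.g.\ an embedding of $\Z^2$ that sees only the first coordinate on even factors and only the second on odd factors), in which case $g$ has fixed points on every $Y_l$ and the vanishing-trace property fails. The paper closes this gap by an explicit arrangement: for fixed $g\neq1$ it forms the finite set $H_g=\{g_i^{-1}gg_i\}\cap H$, \emph{regroups} the tensor factors of the $H$-action (products of free actions being free as soon as one coordinate is) so that every element of $H_g$ acts freely on each regrouped factor, induces those $H$-sets to $G$, and then takes a product over all $g\in G$ of the resulting actions with a resequencing of factors. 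Your "symmetrising over $\Gamma$ / pass to a tail factor" remark gestures at this but, as written, supplies neither the correct formulation of the property nor the combinatorial arrangement that makes the induced action satisfy it.

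Two smaller points. Passing to the normal core $N$ is harmless but unnecessary; the paper works with an arbitrary finite-index $H$ and coset representatives, and normality only simplifies the case $g\notin N$. In your "hands-on" alternative, the identification $\qag\cong B\rtimes_{\bar\alpha}\Gamma$ with $B=\Qq\rtimes_{\alpha|_N}N$ is not available in general: unless the extension $1\to N\to G\to\Gamma\to1$ splits you only get a twisted (cocycle) crossed product, and the claimed tracial Rokhlin property of $\bar\alpha$ is itself unproved; the paper instead verifies the conclusions of Section 5 (Lin's tracial rank zero criterion, Kishimoto's simplicity, Rosenberg's nuclearity, unique trace) directly for $\qag$ from the vanishing-trace property.
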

In this case the actions have a nice construction as a product of induced actions of the previous abelian group actions.

As an application, these actions may be viewed as a class of model actions for the ``Rokhlin property'' with respect to certain classification results. For example, these actions contain those used to model $\Z$ actions (\cite{HO}) and finite group actions (\cite{HJ1}, \cite{HJ2}, \cite{FM1}, \cite{FM2}) with the Rokhlin property. These can also be used to model $\Z^n$ actions with the tracial Rokhlin property (classified in \cite{kishZ}, \cite{MatuiZ2}, \cite{NakaZ2}, \cite{MatuiZN}), which in particular makes it clear that they have the Rokhlin property. Modeling actions beyond abelian groups or finite groups, we have the so called Klein bottle group $\Z\rtimes\Z$ classified in \cite{MS2} and we give an explicit representative for their result here. There are many more groups here whose actions are yet to be classified.

{\bf Acknowledgements:} I would like to thank N. C. Phillips for helpful conversations. This was funded by the Research Center for Operator Algebras at East China Normal University and partially funded by SFB878 and the German Israel Foundation.

\section{Preliminaries}
\subsection{Abelian groups}

An abelian group $G$ is said to be \emph{divisible} if for every $g\in G$ and $n\in\N$ there exists $g'\in G$ such that $g=ng'$.  

\begin{lem}\label{nopdiv}\label{nodiv} If $G$ is divisible, then $G$ does not have a proper subgroup of finite index.
\end{lem}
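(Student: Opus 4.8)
The plan is a short proof by contradiction using the quotient group. Suppose toward a contradiction that $H\le G$ is a proper subgroup of finite index $n=[G:H]$. Since ``divisible'' is only meaningful for abelian groups, $G$ is abelian, so $H$ is automatically normal and $Q=G/H$ is a nontrivial finite abelian group with $|Q|=n\ge 2$. By Lagrange's theorem $n\cdot q=0$ for every $q\in Q$, which translates back to the inclusion $nG\subseteq H$, where $nG=\{ng:g\in G\}$.

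Next I would invoke divisibility for this particular $n$: for each $g\in G$ there is $g'\in G$ with $g=ng'$, so $g\in nG$; thus $nG=G$. Combining the two containments gives $G=nG\subseteq H$, contradicting the assumption that $H$ is proper; hence no such $H$ exists. There is essentially no obstacle here — the only point worth stating explicitly is that abelianness (built into the definition of divisibility) is what makes $G/H$ available as a group, after which the argument is immediate. If one prefers to avoid quotients entirely, the same conclusion follows from a pigeonhole argument on the cosets $H, g+H,\dots, ng+H$ showing $kg\in H$ for some $1\le k\le n$ and then dividing, but the quotient formulation is cleanest.
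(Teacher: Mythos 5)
Your argument is correct and is essentially the paper's own proof: both use divisibility to write $G=nG$ and Lagrange's theorem in the finite quotient $G/H$ to get $nG\subseteq H$, forcing $H=G$. The only difference is cosmetic (phrasing it as a contradiction rather than concluding $H=G$ directly).
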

\begin{proof} Suppose $H$ is a subgroup of $G$ with index $n$ for some $n\in\N$. Let $g\in G$. By divisibility there exists $g'\in G$ such that $g=ng'$. This means that the image of $g$ is zero in the quotient group $G/H$ and hence $g\in H$. Hence $H=G$ and $n=0$. 
\end{proof}
If a group has no divisible subgroup then it called \emph{reduced}.
Every abelian group can be decomposed uniquely as a direct sum of a divisible group and a reduced group. The Pr\"{u}fer $p$-groups $\bigcup \Z/p^r\Z$ and the rational numbers $\Q$ are the only indecomposable divisible groups.

An element $g\in G$ is said to be a \emph{torsion} element if $g$ has finite order. If there is a uniform bound on the orders of a group then we say the group is \emph{bounded}. If $G$ has no torsion elements, then $G$ is said to be \emph{torsion-free}.

\begin{thm}[Baer]\label{baer} Suppose $G$ is a countable abelian bounded group. Then $G$ is a direct sum of finite cyclic groups.
\end{thm}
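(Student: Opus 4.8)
The plan is to reduce to the case of a $p$-group and then induct on the exponent. First, since $G$ is bounded, say $mG=0$ with $m=p_1^{a_1}\cdots p_k^{a_k}$, the primary decomposition gives $G=G_{p_1}\oplus\cdots\oplus G_{p_k}$, where $G_{p_i}$ is the $p_i$-torsion subgroup; each summand is again bounded, and a finite direct sum of direct sums of cyclic groups is again one. Hence it suffices to treat a single bounded $p$-group $G$ with $p^nG=0$, and I would proceed by induction on the least such $n$. The case $n=1$ is linear algebra: $G$ is then a vector space over $\F_p$, so it has a basis and is a direct sum of copies of $\Z/p\Z$.

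For the inductive step ($n\ge 2$, and $p^{n-1}G\neq 0$), consider the $\F_p$-vector space $p^{n-1}G$, choose a basis $\{p^{n-1}x_i\}_{i\in I}$ of it with each $x_i\in G$; necessarily each $x_i$ has order exactly $p^n$. Set $H=\sum_{i\in I}\langle x_i\rangle\subseteq G$. I would first check that this sum is internally direct, so that $H\cong\bigoplus_{i\in I}\Z/p^n\Z$: any finite nontrivial relation $\sum c_ix_i=0$, after multiplication by a suitable power of $p$, yields a nontrivial $\F_p$-linear relation among the $p^{n-1}x_i$, a contradiction. Next I would verify that $H$ is pure in $G$, i.e.\ $H\cap p^kG=p^kH$ for every $k$: an element $h\in H$ of order $p^j$ has, in the basis $\{x_i\}$, all coefficients divisible by $p^{\,n-j}$, while $h\in p^kG$ forces $j\le n-k$, hence $h\in p^{\,n-j}H\subseteq p^kH$. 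Finally note that $p^{n-1}G\subseteq H$ by construction.

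Now I would invoke the classical theorem of Pr\"ufer and Kulikov that a bounded pure subgroup of an abelian group is a direct summand; this gives $G=H\oplus K$. Since $p^{n-1}K\subseteq p^{n-1}G\cap K\subseteq H\cap K=0$, the complement $K$ has exponent dividing $p^{n-1}$, so by the inductive hypothesis $K$ is a direct sum of cyclic $p$-groups, and therefore so is $G=H\oplus K$. The one genuinely nontrivial input is the splitting of bounded pure subgroups; if one wants a self-contained argument (only the countable case is needed here), one takes, via Zorn's lemma or a countable recursion, a subgroup $K$ maximal with $H\cap K=0$ and uses purity and boundedness of $H$ to prove $H+K=G$ — this last step is the one I expect to be the most delicate, and it is where essentially all the content of Baer's theorem sits.
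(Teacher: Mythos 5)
Your argument is correct. Note, though, that the paper does not actually prove this statement: its ``proof'' is just a citation to Kaplansky's book, and what you have written is essentially the classical Pr\"ufer--Baer argument one finds there --- primary decomposition to reduce to a bounded $p$-group, induction on the exponent, construction of a subgroup $H\cong\bigoplus_i\Z/p^n\Z$ containing $p^{n-1}G$ generated by preimages of a basis of the $\F_p$-vector space $p^{n-1}G$, verification that $H$ is pure, and then splitting off $H$. Your verifications of independence of the $x_i$ and of purity (via the order computation $h\in p^kG$, $h$ of order $p^j$ $\Rightarrow$ $j\le n-k$ and all coordinates of $h$ divisible by $p^{n-j}$) are sound, and the descent $p^{n-1}K\subseteq H\cap K=0$ correctly lowers the exponent for the induction. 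The one ingredient you do not prove is the Pr\"ufer--Kulikov theorem that a bounded pure subgroup is a direct summand; you name it precisely and sketch the standard Zorn/maximal-complement argument, which is exactly where the real content lies, so this is an acceptable level of outsourcing --- indeed it is no more than the paper itself outsources by citing Kaplansky, where this same splitting lemma is the engine of the proof. For the paper's purposes the countability hypothesis is inessential (it only serves to avoid uncountable direct sums and heavy use of choice), and your proof works verbatim in the countable setting needed here.
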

\begin{proof} See for example Kaplansky \cite{Kap}.
\end{proof}

\subsection{Reduced abelian groups}
\bd Let $\Cc$ be the class of all countable discrete abelian residually finite groups. Equivalently, $\Cc$ consists of all the countable groups for which there is an embedding into the group $G\hookrightarrow \prod_{n=1}^{\infty}\Z/n\Z$.
\ed
\bt\label{red}  $G\in \Cc$ if and only if $G$ is a countable discrete reduced abelian group.
\et
\bp Lemma \ref{nodiv} tells us there are only reduced groups in $\Cc$. We show the `if' direction.
Let $n\in\Z$. The group $G/nG$ can be seen by Theorem \ref{baer} or otherwise to be a direct sum of finite cyclic groups and is therefore residually finite. So to prove residual finiteness of $G$ it suffices to consider those $g\in G$ that have trivial image in every quotient $G/nG$. That is, $g\in\bigcap_{n\in\N}nG$, which means $g$ is the identity since $G$ is reduced.
\ep

\bc Let $\mathbb{P}$ be the set of all primes. The class $\Cc$ contains all of the subgroups of the infinite direct sum
$$\bigoplus_{r=1}^{\infty}\bigoplus_{p\in \mathbb{P}}(\Z_{(p)}\oplus\Z/p^r\Z).$$\qed
\ec
\bp
It clear that the subgroup of any group in $\Cc$ is again in $\Cc$. The class $\Cc$ is also evidently closed under taking countable direct sums by Theorem \ref{red}. So it suffices to show that $\Z_{(p)}\in\Cc$, noting $\Z/p^r\Z\in\Cc$ is trivial.

and $\Cc$ contains the countable subgroups of the $p$-adic integers for any prime $p$ by construction of the inverse limit. Therefore, $\Cc$ contains $\Z_{(p)}$ for every prime $p$ since these are subgroups of the $p$-adic integers. 
\ep
\subsection{Residually finite groups}\label{recall} A group $G$ is said to be \emph{residually finite} if for every $g\in G$ there is a a finite group $F$ and a group homomorphism $\varphi: G\to F$ such that $\varphi(g)\neq0$. Equivalently, if $G$ is countable, there is a sequence $(n_l)_{l\in\N}$ and an embedding 
$$G\hookrightarrow \prod_{l=1}^{\infty}S_{n_l},$$
where $S_{n_l}$ is the symmetric group on $n_l$.

Suppose $G$ is a group and $H$ is a subgroup. Given an $H$-set $X$,  let $\sim$ be equivalence relation on $G\times X$ generated by $(gh,x)\sim(g,hx)$. Then the induced $G$-set is defined to be
$$G\times_{H}Y= (G\times X)/\sim.$$
If we let $H$ has index $k$ and we let $g_1,\dots, g_k$ be a set of coset representatives of $H$ in $G$, then each element in $G\times_H X$ can be written uniquely as $(g_i,x)$ for $1\leq i\leq k$ and $x\in X$.


\subsection{Rokhlin property}
We say $\alpha\in\Aut\Qq$ is \emph{uniformly outer} if for every $a\in \Qq$, every non-zero projection $p\in \Qq$ and every $\epsilon>0$, there exists $k>0$ mutually orthogonal projections $p_1,\dots, p_k$ such that 
\bi
\item $p=p_1+\dots+p_k$ and \\
\item $p_ia\alpha(p_i)\eqe0$ for $1\leq i\leq k$.
\ei
Let $A$ be a UHF algebra, $\alpha\in\Aut A$ and let $1\leq k< \infty$. We say $\alpha$ has the order $k$ Rokhlin property, if for every $\epsilon>0$ and every finite subset $\{a_1,\dots, a_n\}$ in $A$, there exist mutually orthogonal projections $p_1,p_2,\dots ,p_{k}$ such that 
\bi
\item $[p_i,a_j]\eqe0$ for $1\leq i\leq k$ and $1\leq j\leq n$,\\
\item $\alpha(p_i)\eqe p_{i+1}$ for $1\leq i\leq k$ with $p_{k+1}=p_1$,\\
\item $p_1+\cdots+p_k=1$.
\ei
We take the order $\infty$ Rokhlin property to mean that $\alpha^j$ is uniformly outer for all $j>1$. By Kishimoto \cite{kishZ}, any two automorphisms whose powers are all uniformly outer are outer conjugate. In particular $g$ acts with the Rokhlin property since there exist uniformly outer automorphisms that act with the Rokhlin property.

\bd\label{Rok} Let $G$ be a countable discrete abelian group and $A$ be a UHF algebra. Let $g\in G$ have order $k(g)$. Then an action $\alpha$ of $G$ on $A$ has the \emph{Rokhlin property} if $\alpha_g$ has the order $k(g)$ Rokhlin property for all $g\in G$. We say an action is \emph{uniformly outer} if $\alpha_g$ is uniformly outer for all $g\in G$ except the identity.
\ed

\section{Model actions for reduced abelian groups on $\Qq$}

Let $\rho:\prod_{l=1}^{\infty}\Z/n_l\Z\to\prod_{l=1}^{\infty}U(M_{n_l})$ be the product of the natural representations.

\bprop\label{abuo} Suppose $G$ is an abelian group and $\alpha$ is an action of $G$ on $\Qq$ of the form
$$\alpha:G\stackrel{i}\hookrightarrow\prod_{l=1}^{\infty}\Z/n_l\Z\stackrel{\rho}\hookrightarrow\prod_{l=1}^{\infty}U(M_{n_l})\stackrel{\Ad}{\rightarrow}\Aut\Qq.$$
Then $\alpha$ is uniformly outer if and only if $i(G)\cap\bigoplus_{l=1}^{\infty}\Z/n_l\Z=0$.
\eprop
\bp Since the direct sum acts by inner automorphisms, necessity is clear. We now show sufficiency. Let $g\in G$ such that $i(g)$ is not in the direct sum. Let $a\in A$, let $p\in A$ be a non-zero projection and let $\epsilon>0$. Without loss of generality we can assume $\|a\|=1$ and $\epsilon$ is small. By the direct limit construction, there exists $L\in\N$ and $b,q\in M_{n_1}\otimes\dots\otimes M_{n_L}$ such that $a\approx_{\epsilon}b$ and $p\approx_{\epsilon}q$. By functional calculus we can take $q$ to be a projection.  

By assumption we can choose $l>L$ such that $i(g)_{n_l}\in U(M_{n_l})$ has order $k\neq1$. Let $e_{i,i}$ be the $i$-th diagonal matrix unit and define projections $q_1',\dots, q_k'\in M_{n_l}$ that sum to $1$ by
$$q_j'=\sum_{\lfloor ik/n_l\rfloor=j}e_{i,i}.$$
Cutting down we define $p_j'=pq_j'p$ for $1\leq j\leq k$, which are approximately projections within $2\epsilon$ and sum to $p$. By functional calculus we get projections $p_j\approx_{4\epsilon}p_j'$ in $p\Qq p$. Therefore
$$p_1+\cdots+p_k\approx_{4k\epsilon} p_1'+\cdots +p_k'=p,$$
which is the unit in $p\Qq p$. Hence $p_1+\cdots+p_k=p$, the only invertible projection in $p\Qq p$. It is now routine to check $p_ja\alpha(p_j)\approx_{13\epsilon}0$. 
\ep

\bt Suppose $G$ is an abelian group and $\alpha$ is an action of $G$ on $\Qq$ of the form
$$\alpha:G\stackrel{i}\hookrightarrow\prod_{l=1}^{\infty}\Z/n_l\Z\stackrel{\rho}\hookrightarrow\prod_{n=1}^{\infty}U(M_{n_l})\stackrel{\Ad}{\rightarrow}\Aut\Qq.$$
Then $\alpha$ has the  Rokhlin property if and only if $i(G)\cap\bigoplus_{l=1}^{\infty}\Z/n_l\Z=0$.
\et
\bp We prove sufficiency. Let $g\in G$. By Proposition \ref{abuo} we only need to consider the case that $g$ has finite order $k$. Let $k=P_1^{r_1}\cdots P_s^{r_s}$ be its prime factorisation into distinct primes. The assumption that no power of $i(g)$ is in the direct sum guarantees for $1\leq i \leq s$ that $P_i^{r_i}$ divides the order of $i(g)_{n_l}$ for infinitely many $l\in\N$.

Let $\epsilon>0$ and let $a_1,\dots, a_n\in \Qq$. Since $\Qq$ is a direct limit, there exists $L\in\N$ and $b_1,\dots, b_n\in M_{n_1}\otimes\dots\otimes M_{n_L}$ such that $a_i\approx_{\epsilon} b_i$ for $1\leq i\leq n$. Also for $1\leq j\leq s$, there exists $l_j>L$ all distinct such that $i(g)_{n_{l_j}}$ has order divisible by $P_j^{r_j}$. So $(i(g)_{n_{l_j}})_{1\leq j\leq s}\in\prod_{1\leq j\leq s}\Z/n_{n_{l_j}}\Z$ has order divisible by $k$. We will now be able to find $k$ projections in $M_{n_{l_1}}\otimes\cdots\otimes M_{n_{l_s}}$ to witness the order $k$ Rokhlin property.
\ep

\bprop $G\in\Cc$ if and only if there exists $(n_l)_{l\in\N}$ and an embedding $i:G\hookrightarrow\prod_{l=1}^{\infty}\Z/n_l\Z$ such that $i(G)\cap\bigoplus_{l=1}^{\infty}\Z/n_l\Z=0$.
\eprop 
\bp The `if' direction is trivial. Conversely, assume $G\in\Cc$. Then there is an embedding $i:G\hookrightarrow\prod_{l=1}^{\infty}\Z/m_l\Z$ for some sequence $(m_l)_{l\in\N}$. Now consider the diagonal embedding $i^{\N}:G\hookrightarrow (\prod_{l=1}^{\infty}\Z/m_l\Z)^{\N}$ and take $(n_l)_{l\in\N}$ to be an appropriate resequencing.
\ep

\section{Equivariant $K$-groups}


\bprop\label{finiteabelian} Suppose $\alpha$ is an action of a finite group $H$ on $\Qq$ with the Rokhlin property. Then $\Qq\rtimes_{\alpha} H\cong \Qq$.
\eprop
\bp Phillips \cite{ncp} shows that the crossed product is AF, but the proof applied here will actually show the crossed product is in fact UHF and therefore must be $\Qq$.
\ep 
\bt Let $G$ be a countable discrete abelian group of rank $r$ and let $\alpha$ be an action of $G$ on $\Qq$ with the  Rokhlin property of the form
$$\alpha: G\hookrightarrow\prod_{l=1}^{\infty}U(M_{n_l})\stackrel{\Ad}{\rightarrow}\Aut\Qq.$$
Then for $i\in\{0,1\}$
\bi
\item $K_i(\qag)\cong\Q^{2^{r-1}}$ if $r$ is finite.\\
\item $K_i(\qag)\cong\Q^{\oplus\N}$ if $r$ is infinite.\\
\item In either case, the inclusion of any rank $r$ free abelian subgroup $F$ induces a quasi-isomorphism $\Qq\rtimes F\hookrightarrow \Qq\rtimes G$.
\ei
\et
\bp First assume $G$ is finitely generated with $G=\Z^r\oplus H$ for some finite abelian group $H$. Then $C^*(G)\cong C^*(\Z^r)\otimes C^*(H)$ and we have $\qag$ as the limit of the system
$$M_{(n_l)_{l\leq m}} \otimes C^*(\Z^r)\otimes C^*(H)\to M_{(n_l)_{l\leq m}}\otimes M_{n_{m+1}}\otimes C^*(\Z^r)\otimes C^*(H),$$
given on generators by
$$x\otimes z\otimes h\mapsto x\otimes z_{m+1}h_{m+1}\otimes z\otimes h.$$
Since $G$ is abelian we can simultaneously diagonalize the image of $C^*(G)$ in $M_{n_{m+1}}$ and
define a homotopy $t\mapsto z_{m+1}^th_{m+1}$, which extends to a homotopy of $*$-homomorphisms that connect our original map to:
$$x\otimes z\otimes h\mapsto x\otimes h_{m+1}\otimes z\otimes h,$$
from which we see the limit is $C^*(\Z^r)\otimes (\Qq\rtimes_{\alpha|_H} H)$. For countable $G$, there is an increasing sequence of finitely generated subgroups $(G_n)_{n\in\N}$ of ranks $(r_n)_{n\in\N}$ such that $G=\varinjlim (G_n,\varphi_n: G_n\hookrightarrow G_{n+1})$. 
Fix decompositions $G_n=\Z^{r_{n}}\oplus H_n$ for a finite  group $H_n$. Let $k\in\N$ such that $k\varphi_n(G_n)\subset \Z^{r_{n+1}}$ and consider the following commutative diagram:
$$\begin{CD}
 	\Z^{r_n}	\oplus H_n			@>\varphi_n>>\Z^{r_{n+1}}\oplus H_{n+1}\\
@A k \oplus0AA											@A \id\oplus0 AA\\
\Z^{r_n} @>k\varphi>>\Z^{r_{n+1}}
\end{CD}$$
Take the crossed products of the actions given by restriction of the action of $G$ and consider the homotopy defined above for finitely generated groups. These are preserved by the lower three maps and we have the following commutative diagram of $C^*$-algebras:
$$\begin{CD}
 	C^*(\Z^{r_{n}})\otimes(\Qq\rtimes H_n)			@>>>C^*(\Z^{r_{n+1}})\otimes(\Qq\rtimes H_{n+1})\\
@AC^*(k)\otimes A\id u_1A											@A \id\otimes A\id u_1A\\
C^*(\Z^{r_n})\otimes\Qq@>C^*(k\varphi)\otimes\id>>C^*(\Z^{r_{n+1}})\otimes\Qq
\end{CD}$$
It suffices now to show the top map is injective on $K$-groups. We first observe the vertical maps induce isomorphisms on $K$-groups, while the bottom horizontal map is injective on $K$-groups and the result follows.
\ep

\section{Almost reduced abelian groups on $\Qq$}

Let $\rho:\prod_{l=1}^{\infty}\Z/n_l\Z\to\prod_{l=1}^{\infty}U(M_{n_l})$ be the product of the natural representations. Recall the fibered product construction in \ref{recall}. We will use this to construct actions of groups that already have an action for one of its finite index subgroups. 

\bt Let $G$ be a countable discrete group with a subgroup $H$ of finite index $k$. Suppose there is an action of $H$ on $\Qq$ given by
$$\alpha_H: H\hookrightarrow\prod_{l=1}^{\infty}S_{m_l}\stackrel{\rho}{\rightarrow}\prod_{l=1}^{\infty}U(M_{n_l})\stackrel{\Ad}{\rightarrow}\Aut M_{(m_l)_{l\in\N}}$$
such that for each $h\in H$ not $1$, $\tau(\rho_l(h))=0$ for infinitely many $l\in\N$.
Then there is an action of $G$ on $\Qq$ given by
$$\alpha_H^G: G\stackrel{}{\hookrightarrow}\prod_{l=1}^{\infty}S_{n_l}\stackrel{\rho}{\rightarrow}\prod_{l=1}^{\infty}U(M_{n_l})\stackrel{\Ad}{\rightarrow}\Aut M_{(n_l)_{l\in\N}}$$
such that for each $g\in G$ not $1$, $\tau(\rho_l(g))=0$ for infinitely many $l\in\N$. Moreover, the sequence $(m_l)_{l\in\N}$ can be chosen to be of infinite type divisible by $k$ and the type of $(n_l)_{l\in\N}$.
\et
\bp Let $g\in G$ not the identity. Let $g_1,\dots, g_k$ be coset representatives for $H$. There are at most $k$ elements in $H$ for which $g_i^{-1}gg_i\in H$ for some $i$. Call this set $H_g$. Now regroup the factors for $\alpha_H$ we can arrange it so that $\tau(\rho(\iota_l(h)))=0$ for all $h\in H_g$. Since each $\rho_l$ is a permutation representation we get a sequence of finite $H$-sets $(X_l)_{l\in\N}$ such that $h$ acts with no fixed points for all $h\in H_g$. Now define a sequence $(Y_l)_{l\in\N}$ of finite $G$-sets using the fibered product construction $Y_l=G\times_HX_l$, which will not have any fixed points for $g$ by definition of $X_l$ and $H_g$. Hence we have an action given by $\alpha_H^G[g]$ of the desired form such that $\tau(\rho(\kappa_l(g)))=0$ for all $l\in\N$. Now taking a product over $g\in G$ and resequencing the factors appropriately, we get the desired result.
\ep
The vanishing trace property stated above is stronger than the property in Definition \ref{Rok} and is equivalent to it for reduced abelian groups. Taking this as the ``Rokhlin property'' we have
\bc Suppose $G$ is a countable discrete group with a reduced abelian subgroup $H$ of finite index. Then there is an action of $G$ with the ``Rokhlin property'' of the form
$$\alpha: G\stackrel{}{\hookrightarrow}\prod_{l=1}^{\infty}S_{n_l}\stackrel{\rho}{\rightarrow}\prod_{l=1}^{\infty}U(M_{n_l})\stackrel{\Ad}{\rightarrow}\Aut M_{(n_l)_{l\in\N}}.$$
\ec

\section{Classification of the crossed products $\qag$}
We show all of the crossed products considered belong to a particularly nice class of classifiable $C^*$-algebras.
\bt Suppose $G$ is a countable discrete almost abelian group and $\alpha$ is an action of $G$ on $\Qq$ with the Rokhlin property and given by
$$\alpha: G\hookrightarrow\prod_{n=1}^{\infty}U(M_{n_l})\stackrel{\Ad}{\rightarrow}\Aut\Qq.$$
Then $\qag$ is a unital separable simple nuclear tracially approximately finite dimensional locally type I $C^*$-algebra satisfying the Universal Coefficient Theorem (UCT) and has a unique tracial state.
\et
\bp We verify the conditions of Lin's \cite[Theorem 5.16]{LintypeI} to conclude that $\qag$ has tracial rank zero.  Having unique trace follows from the proof of \cite[Lemma 4.3]{kishUO}. Simplicity follows from Kishimoto \cite[Theorem 3.1]{kishO}. Nuclearity follows from Rosenberg \cite[Theorem 1]{JR}. Locally type I follows from $C^*(G)$ being type I for abelian groups and the direct limit decomposition of $\qag$. We now have real rank zero and stable rank one from R\o rdam \cite[Theorem ]{Rordam} in the presence of $\Zz$-stability, which is guaranteed from Matui-Sato \cite[Corollary 4.11]{MS2}. It also has weakly unperforated $K_0$. UCT is \cite{WELP}
\ep



\section{Examples}
\begin{eg}\label{egg} We give an action of any countable free abelian group on $\Qq$ via an action of $\Z^{\oplus\N}$ on $\Qq$. It suffices to define an action $\alpha$ of $\Z$ on $\Qq$ since then we can take tensor products to get an action of $\Z^{\oplus\N}$ on $\Qq^{\otimes\N}$. Since $\Z$ is torsion free any embedding into a product of finite cyclic groups will give a desired action by Theorem \ref{rokab}. Classification and $K$-group formulas for free abelian group actions have been given by Herman, Ocneanu, Kishimoto, Nakamura, Katsura and Matsui (\cite{HO}, \cite{kishZ}, \cite{MatuiZ2}, \cite{NakaZ2} and \cite{MatuiZN}).
\end{eg}
\begin{eg}
The Klein bottle group has presentation 
$$\Z\rtimes\Z=\langle a,b \,|\, bab^{-1}=a^{-1} \rangle.$$
Notice that $\Z\rtimes\Z$ is an almost abelian group with subgroup $N$ generated by $a$ and $b^2$ isomorphic to $\Z\oplus\Z$. This subgroup clearly has index $2$ and is therefore normal. 

Get an action of $N=\Z\oplus\Z$ using Example \ref{egg}. For example, we could use the product of the quotient maps $\Z\oplus\Z\to \Z/p^l\Z\oplus\Z/p^l\Z$. For each quotient map, we have an action on some finite set $X_{p^l}^2$ of size $p^{2l}$ and we can take the fibered product $Y_{2p^{2l}}=(\Z\rtimes\Z)\times_{N} (X_{p^l}\times X_{p^l})$ to get a set of size $2p^{2l}$ on which $\Z\rtimes\Z$ acts. The corresponding maps $\Z\rtimes\Z\to S_{2p^{2l}}$ then combine to give the desired action.

Classification and $K$-group formulas were given by Matui-Sato \cite[Theorem 7.9]{MS2}
\end{eg}

\end{document}